\documentclass{amsart}

\usepackage{
amsfonts,
amsmath,
latexsym,
amssymb,
enumerate,
verbatim,
mathrsfs,
graphicx,
centernot,
accents, 
color,
}

\usepackage[all]{xy}

\newcommand{\labbel}[1]{\label{#1} [[{\bf #1}]]}  
\renewcommand{\labbel}{\label}

 \definecolor{reedd}{RGB}{0,0,40}

\newcommand{\arxiv}[1]{{\color{reedd}#1}}

\newcommand{\ddd}{\mathrel{\delta}} 
\newcommand{\nd}{\mathrel{\centernot\delta}}

\newcommand{\sharr}{\text{\tiny{$\rightarrow$}}}

\newcommand{\nup}{{\centernot\uparrow}}

\newcommand{\ua}{{\uparrow}}

\newtheorem{theorem}{Theorem}[section]
\newtheorem{lemma}[theorem]{Lemma}

\newtheorem{corollary}[theorem]{Corollary}

\newtheorem*{claim*}{Claim}

\newtheorem*{theorem*}{Theorem}
\newtheorem*{proposition*}{Proposition}
\newtheorem*{corollary*}{Corollary}
\newtheorem*{lemma*}{Lemma}
\newtheorem*{scholion*}{Scholion}

\theoremstyle{definition}
\newtheorem{definition}[theorem]{Definition}

\newtheorem{problems}[theorem]{Problems} 

\theoremstyle{remark}
\newtheorem{remark}[theorem]{Remark}
\newtheorem{remarks}[theorem]{Remarks}
\newtheorem*{remark*}{Remark}
\newtheorem*{remarks*}{Remarks}

\newtheorem{examples}[theorem]{Examples}

\newtheorem*{observation*}{Observation}

 \allowdisplaybreaks[1]

\begin{document}

 \title{Contact semilattices}

\author{Paolo Lipparini} 
\address{Dipartimento di Matematica\\Viale 
dei Contatti di  Ricerca 
\\Universit\`a di Roma ``Tor Vergata'' 
\\I-00133 ROME ITALY}

\email{lipparin@axp.mat.uniroma2.it}

\subjclass{06A12; 06F99, 03G25, 54E05}

\begin{abstract}
We devise  exact conditions under which
a join
semilattice with a weak contact relation
can be semilattice embedded into a
Boolean algebra with an overlap  contact relation,
equivalently, into a
distributive lattice with additive  contact relation.
A similar characterization is proved with respect to 
Boolean algebras and distributive lattices with weak contact,
not necessarily additive, nor overlap.
\end{abstract} 

\keywords{weak contact relation, 
 overlap contact relation, contact join-semilattice,
contact Boolean algebra}

\maketitle

\section{Introduction} \labbel{intro}
 
Contact algebras
are Boolean algebras endowed with a further
contact binary relation; they
 play an important role in 
 region-based theory of space.
Indeed, the theory of contact  algebras
can be seen as a very general 
\emph{first-order theory intended to model topological properties of regions}
\cite[Sect. 4]{BD}.
Contact algebras can also be seen as a point-free algebraization of
proximities, a useful generalization of the notion of topology
\cite{BD,D,N}. 
Contact algebras and generalizations 
 have been used to provide
a logical calculus for compact Hausdorff spaces \cite{BBSV}
and have applications
to the study of nonstandard rules in modal logics \cite{BCGL}. 
See the quoted sources for more references and details.

D{\"u}ntsch, MacCaull, Vakarelov and
 Winter
\cite{DMVW} propose to
drop the notion of complement, thus
considering contact distributive lattices. More radically,
Ivanova \cite{I} provides arguments suggesting the naturalness 
of contact join-\hspace{0 pt}semilattices, namely,
to consider only join, but not complementation, neither meet.
In a nutshell, if $a$ is a region of space,
its complement is possibly not well-defined, since
it is  dependent on  the universe in which
we consider $a$. Moreover, if one considers large but limited regions of space,
both complementation and meet might turn out to be 
inappropriate.

Parallel arguments in favor of the use of 
the join operation only are presented in  \cite{mtt}.
In \cite{mtt} we proposed the project of
detecting which topological properties are preserved
``covariantly''  by image functions
associated to continuous maps. This is more close
to intuition than the present-day customary way of working
contravariantly, say, describing continuity in terms of preservation
of openness through preimages. Indeed, as already stressed by
K. Kuratowski, a function between topological spaces
is continuous if and only if it preserves the adherence relation
between points and subsets. Contact  between subsets
is preserved, as well,
by image functions
associated to continuous maps,
 if we define two subsets $x$ and $y$
to be in contact when $Kx \cap Ky \neq \emptyset  $, where
$K$ denotes closure (this is a basic example of
what in a topological 
context  is called a \emph{proximity} \cite{D}).
We refer to \cite{mtt} for more details. 
As far as the emphasis on join semilattices here is concerned,
notice that if  $f^\sharr$ is the
image function from $\mathcal P(X)$
to $\mathcal P(Y)$ associated to some function
$f: X \to Y$, then $f^\sharr$ preserves unions but not
necessarily
intersections or complements. 

At a very elementary level, the use of 
the join operation alone is also suggested by the fact that 
it is the only operation appearing
in the axioms for an additive contact relation; see Definition \ref{defs} below. 
As another advantage, representation theorems
for semilattices generally  do not need the axiom of choice.
See Remark \ref{ac}, as far as the results presented
here are concerned. 

 A weak definition of contact
in a semilattice guarantees that the semilattice can
be  embedded into a complete lattice, which
can be chosen to have  overlap contact \cite[Theorem 4(b)]{cp}.
On the other hand, the definition  is too weak to guarantee
that a contact semilattice can be  embedded
into a weak contact \emph{distributive} lattice.
Here we find necessary and sufficient conditions under
which the above embeddability can be obtained.
This is also equivalent to being embeddable
into a weak contact Boolean algebra, which can 
be chosen to be atomic and complete. See Theorem \ref{thmb}. 

Moreover, we show in Theorem \ref{thm} 
that embeddability into  an additive contact distributive lattice
 is equivalent to embeddability into a
Boolean algebra with overlap contact.
Thus, as far as weak contact \emph{distributive lattices} are concerned,
\emph{additivity}  alone is enough to guarantee semilattice embeddability
into Boolean algebras with \emph{overlap} contact\footnote{Of course,
 we cannot get lattice embeddability,
since nonoverlap contact relations are maintained by homomorphisms
preserving both contact and meet.}.
Theorem \ref{thm} also provides an equivalent axiomatization
for \emph{Contact join-semilattices} in the sense of \cite{I}.
See Corollary \ref{iv}.
This confirms the usefulness of the notion of a 
Contact join-semilattice. 
On the other hand, Theorem \ref{thmb} provides a larger class
of weak contact semilattices, a class 
which seems to be of interest, as well.
The two classes are distinct, as we will show in Example \ref{ex}(c). 

While, as  summarized above, the main emphasis in this note is about 
very ``regular'' contact semilattices enjoying particularly good properties
and satisfying refined representation theorems,
it is conceivable that also the more general notion of a weak contact semilattice
is interesting and will find significant applications in the future, as already 
hinted in
the final section of \cite{DW}.
There are easily constructed examples which lie beyond
the above-described classes: the   nondistributive
five-element  modular lattice $\mathbf M_3$ with  overlap contact is not additive.
Even if we give  $\mathbf M_3$ an additive contact
structure, certain representation theorems fail.
See Examples \ref{ex}(a)(b). 
Such examples are interesting since  there are many significant
examples of nondistributive lattices having
various kinds of applications 
in many sciences \cite{PP,R}. 

In an even more general situation, weak contact relations on posets, under different
terminology and with entirely different motivations,
appear also as  \emph{event structures}
in computer science, e.~g.,  \cite[Section 8]{WN}.
See \cite[Remark 7]{cp} for explicit details. 

In conclusion, weak contact posets and semilattices
have intrinsic interest. Applications to logics are presented in \cite{I}.
There are also plenty of logical and topological applications 
of contact lattices and contact (Boolean) algebras 
\cite{BD,BBSV,BCGL,DV,DMVW,PH}, among many others, while
logical applications of semilattices, possibly
with further structure, are recalled in \cite[Chapters 6--8]{CHK}. 
This suggests that similar applications of
weak contact join-semilattices will be found, in particular,
with regards to 
 fragments of logics with neither
 negation, nor 
conjunction.
 As a small  logical application,
we use our representation theorems in order to characterize
 the set of universal consequences of the
theory of Boolean algebras with a contact relation  
in the language of contact semilattices. See 
Corollary \ref{conuv}.

\section{Preliminaries and basic definitions} \labbel{prel} 

\begin{definition} \labbel{defs}    
In the present note \emph{semilattices} are always  intended in the sense
of join semilattices with a minimum element $0$.
The semilattice operation will be denoted by $+$. In any
join semilattice the operation $+$ induces a partial order 
$\leq$ defined by $a \leq b$ if $a+b=b$.   
When we speak of a partial order in a semilattice,
we will always mean the order defined above.

The existence of $0$ is assumed only for simplicity.
For example, we shall consider embeddings into
Boolean algebras, which indeed have a $0$.
Were we considering semilattices without $0$,
we should give distinct definitions of overlap,
according to the presence or the absence of $0$. 
See Remark 6 in \cite{cp} for further details. 

A \emph{weak contact relation} (or \emph{basic contact relation})
on some poset $\mathbf S$ 
with $0$ is 
 a binary relation $\delta$  on $S$ such that     
 \begin{align}
\labbel{sym}    \tag{Sym} 
 &  a \ddd b \Leftrightarrow    b \ddd a
\\
\labbel{emp} \tag{Emp}
& a \ddd b  \Rightarrow a > 0  \  \& \ b>0,  
\\
\labbel{ext}    \tag{Ext} 
& a \ddd b \  \& \  a \leq a_1\  \&\  b \leq b_1
\Rightarrow a_1 \ddd b_1,
\\
\labbel{ref}    \tag{Ref} 
& n \neq 0  \Rightarrow  n \ddd n,
  \end{align}
for all $n, a, b, c, a_1, b_1 \in S$. 
We write $a \nd b$ to mean that  $a \ddd b$ does not
hold.  
 The definition of a weak contact relation appears
in \cite{DW}, with main emphasis on lattice-ordered structures.  
Weak contact relations on posets 
 have been studied
in \cite{cp}.

A \emph{weak contact  semilattice}
is a structure   $(S, {+}, 0, { \delta  } )$,
where $(S, {+}, 0)$ is a semilattice with $0$  and $\delta$ 
is a   weak contact relation, as defined above.

The canonical example of a  weak contact  semilattice
is the following.
If $\mathbf S$ is a semilattice 
(or just a poset) with $0$,
then, setting 
\begin{equation}\labbel{ove} \tag{Ove}     
\text{$a \ddd b$ if there is $n \in S$, $n >0$
such that $n \leq a$ and $n \leq b$,} 
   \end{equation}
we get a weak contact relation.
The relation $\delta$ defined in \eqref{ove}  is called the
\emph{overlap} (or \emph{trivial},
or \emph{minimal})  contact relation.

The following property
is frequently required in the definition of a contact
relation (this is the reason for the terminology including
``weak'').
An \emph{additive contact relation} on some semilattice is a weak contact relation
satisfying the following condition. 
\begin{align} 
\labbel{add}    \tag{Add} 
&  a \ddd b+c 
\Rightarrow 
 a \ddd b \text{ or } a \ddd c.
 \end{align} 

The overlap contact relation defined in \eqref{ove}
does not necessarily satisfy 
\eqref{add}. For example, consider 
$\delta$ defined in a $5$-element modular lattice with three
atoms; see Example \ref{ex}(a)  below.
On the other hand, if a semilattice has a distributive lattice order  
and $\delta$ is defined by \eqref{ove},
then \eqref{add} holds, as well. See Lemma \ref{distlem} 
  below.
 An \emph{additive contact
semilattice} is a semilattice with an additive
contact relation.
\end{definition}   

We will consider the following properties of 
a weak  contact semilattice $\mathbf S$.
\begin{align}
 \labbel{d1}    \tag{D1} 
&\begin{aligned} 
& \text{For every }  
a,b ,  c_{0}, c_{1} \in S,
 \text{ if }
 b \leq a +  c_{0}, \ 
 b \leq a +  c_{1} \text{ and }  
 c_{0}\nd c_{1},
\\
& \text{then  } b \leq a.  
\end{aligned} 
\\[8 pt]
 \labbel{d2}    \tag{D2} 
&\begin{aligned} 
& \text{For every } n \in \mathbb N \text{ and } 
a,b ,  c_{1,0}, c_{1,1},  
\dots, c_{n,0},  c_{n,1} \in S,
\\
& \text{if } 
  c_{1,0}\nd c_{1,1},   
\dots,   c_{n,0}\nd c_{n,1} 
\text{ and, for every $f: \{ 1, \dots , n\} \to \{ 0, 1  \}$,}
\\
&\text{either }
b \leq   c_{1,f(1)}  + \dots + c_{n,f(n)}, \text{ or } 
a \leq   c_{1,f(1)}  + \dots + c_{n,f(n)},
\\
& \text{then  } b \nd a.  
\end{aligned} 
  \end{align}  

\begin{remark} \labbel{noti}
 In case $n=0$ in \eqref{d2} we get
 an empty set of summands on the right-hand sides of 
the inequalities. We assume that an empty sum is evaluated to $0$.
Thus the case $n=0$ in \eqref{d2}
is a restatement of 
 \eqref{emp}.   

Moreover, by taking $a=0$ and $b=c_0=c_1 >0$,   \eqref{d1}
implies \eqref{ref}. The case $n=1$ of \eqref{d2}
implies \eqref{ext}. The case $n=1$ of \eqref{d2} implies
also \eqref{sym}, by taking 
$c_{1,1} = b$ and  $c_{1,0} = a$. 
The case $n=2$ in \eqref{d2} with  $c_{1,0} = b$,  $c_{1,1} = a$, 
$c_{2,0} = b$, $c_{2,1} = c$ and $a+c$ in place of $a$  implies
\eqref{add} (in contrapositive form and with $a$ and  $b$ shifted). 
\end{remark}

\begin{lemma} \labbel{lem}
If $\mathbf S$ is a weak contact semilattice 
and $\mathbf S$ satisfies \eqref{d1},
 then $\mathbf S$ satisfies the following condition.
\begin{equation}     
 \labbel{d1+}    \tag{D1+} 
\begin{aligned} 
& \text{For every positive } n \in \mathbb N \text{ and } 
a,b ,  c_{1,0}, c_{1,1},  
\dots, c_{n,0},  c_{n,1} \in S,
\\
& \text{if } 
  c_{1,0}\nd c_{1,1}, \  c_{2,0}\nd c_{2,1}, \  
\dots, \  c_{n,0}\nd c_{n,1} 
\\
&\text{and  }
b \leq 
a +  c_{1,f(1)}  + \dots + 
 c_{n,f(n)}, \text{ for all $f: \{ 1, \dots , n\} \to \{ 0, 1  \}$,}
\\
& \text{then  } b \leq a.  
\end{aligned} 
\end{equation}
 \end{lemma}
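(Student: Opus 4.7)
The plan is to prove (D1+) by induction on $n$, treating (D1) itself as the base case and using (D1) at each inductive step to peel off one pair of summands.

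For the base case $n=1$, the statement of (D1+) with one pair $(c_{1,0}, c_{1,1})$ reads: if $c_{1,0} \nd c_{1,1}$ and $b \leq a + c_{1,0}$ and $b \leq a + c_{1,1}$, then $b \leq a$. This is verbatim (D1), so nothing is to do.

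For the inductive step, suppose $n \geq 2$ and that (D1+) is established for $n-1$. Given the hypothesis of (D1+) for $n$, fix an arbitrary $f: \{1, \dots, n-1\} \to \{0,1\}$ and set
\[
a_f \;=\; a + c_{1,f(1)} + \dots + c_{n-1,f(n-1)}.
\]
Extending $f$ by $f(n)=0$ and by $f(n)=1$ and applying the hypothesis, we obtain $b \leq a_f + c_{n,0}$ and $b \leq a_f + c_{n,1}$. Since $c_{n,0} \nd c_{n,1}$, (D1) applied to $a_f$, $b$, $c_{n,0}$, $c_{n,1}$ yields $b \leq a_f$. Thus, for every $f: \{1, \dots, n-1\} \to \{0,1\}$,
\[
b \;\leq\; a + c_{1,f(1)} + \dots + c_{n-1,f(n-1)}.
\]
The inductive hypothesis, applied with the $n-1$ pairs $(c_{1,0}, c_{1,1}), \dots, (c_{n-1,0}, c_{n-1,1})$ and the same $a,b$, gives $b \leq a$, as required.

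No step looks like an obstacle; the only point to be attentive about is that the inductive hypothesis requires a positive index, which is fine since we invoke it at $n-1 \geq 1$. The essence of the argument is that (D1+) is a simple iterative form of (D1), and induction lets us collapse the $2^n$ hypotheses into $2^{n-1}$ by discharging the last coordinate through (D1).
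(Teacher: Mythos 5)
Your proof is correct and follows essentially the same route as the paper's: induction on $n$, with (D1) as the base case and the last pair $(c_{n,0},c_{n,1})$ peeled off at each step by applying (D1) with a larger element in place of $a$. The only (immaterial) difference is the order of operations: you apply (D1) once for each of the $2^{n-1}$ functions $f$ and then invoke the inductive hypothesis, whereas the paper first invokes the inductive hypothesis twice (with $a+c_{n+1,0}$ and $a+c_{n+1,1}$ in place of $a$) and finishes with a single application of (D1).
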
 

\begin{proof} 
By induction on $n$.
\eqref{d1} is the special case $n=1$ of \eqref{d1+}.

Suppose that we have proved \eqref{d1+} for some specific $n>0$
and suppose that the assumptions of \eqref{d1+} are satisfied for 
$n+1$, say, for certain elements $a,b, \dots, c_{n+1,0}, c_{n+1,1}$. 
 
From $b \leq 
a +  c_{1,f(1)}  + \dots + 
 c_{n,f(n)}+  c_{n+1,f(n+1)}$, for all $f: \{ 1, \dots , n, n+1\} \to \{ 0, 1  \}$,
we get
$b \leq 
a +   c_{n+1,0} + c_{1,g(1)}  + \dots + 
 c_{n,g(n)}$, for all $g: \{ 1, \dots , n\} \to \{ 0, 1  \}$.
By applying \eqref{d1+} in case $n$  with  
$a +   c_{n+1,0} $ in place of $a$, we get
$b \leq a +   c_{n+1,0}$.
Similarly, $b \leq a +   c_{n+1,1}$.
Then \eqref{d1} 
(with   $ c_0 = c_{n+1,0}$ and $ c_1 = c_{n+1,1}$)
gives $b \leq a$.
  \end{proof}

The next lemma appears in \cite[Lemma 2, item 1]{DW}.

\begin{lemma} \labbel{distlem}
In a distributive lattice\arxiv{\footnote{\arxiv{or just in a meet-semilattice
 semidistributive at $0$.}}} 
with overlap contact
the contact relation satisfies \eqref{add}. 
 \end{lemma}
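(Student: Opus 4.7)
The plan is to unpack the definition of the overlap relation, use distributivity to split the witnessing element, and then observe that at least one piece of the split must be strictly positive.

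Concretely, suppose $a \ddd b+c$ in a distributive lattice $\mathbf S$ equipped with its overlap contact relation. By \eqref{ove} there exists $n \in S$ with $n > 0$, $n \leq a$, and $n \leq b+c$. The key move is to write $n$ as the join of its overlaps with $b$ and with $c$: using the distributive law in the form $n \wedge (b+c) = (n \wedge b) + (n \wedge c)$ and the fact that $n \leq b+c$ means $n = n \wedge (b+c)$, I obtain
\begin{equation*}
n = (n \wedge b) + (n \wedge c).
\end{equation*}

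Since $n > 0$, the two joinands on the right cannot both equal $0$; say $n \wedge b > 0$ (the case $n \wedge c > 0$ is symmetric, and at least one of the two alternatives must hold). Set $n' := n \wedge b$. Then $n' > 0$, $n' \leq b$, and $n' \leq n \leq a$, so by \eqref{ove} we conclude $a \ddd b$. In the other case we get $a \ddd c$, which is exactly \eqref{add}.

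There is essentially no obstacle: the whole argument hinges on the identity $n \wedge (b+c) = (n \wedge b) + (n \wedge c)$, which is immediate from distributivity, and on the trivial observation that a positive element cannot be written as a join of two zeros. The footnote in the excerpt indicates that in fact one needs only that meets distribute over joins \emph{producing $0$}, hence the weaker hypothesis of being a meet-semilattice ``semidistributive at $0$'' suffices; the argument above goes through verbatim in that setting, since the only use of distributivity is to ensure that $n = (n \wedge b) + (n \wedge c)$ forces $n \wedge b > 0$ or $n \wedge c > 0$ whenever $n > 0$.
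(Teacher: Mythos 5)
Your proof is correct and follows essentially the same route as the paper's: distribute a meet over the join $b+c$ and observe that a positive element cannot be a join of two zeros. The paper merely streamlines this by first noting that in a lattice the overlap witness can always be taken to be $ab$ itself (so $a\ddd b$ iff $ab\neq 0$), whereas you carry along a general witness $n$; the two arguments are interchangeable.
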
 

\begin{proof}
In a  distributive lattice with overlap contact $a \ddd b$
if and only if $ab \neq 0$. Thus if $a \ddd b+c$,
then $ab+ac=a(b+c) \neq 0$, hence either 
$ab \neq 0$ or $ac \neq 0$, thus either $a \ddd b$
or $a \ddd c$.
 \end{proof}

\section{Embedding contact semilattices into 
overlap Boolean algebras} \labbel{emb1} 

\begin{definition} \labbel{emb}    
An \emph{embedding} $\varphi$ 
of (weak) contact semilattices is an injective  
 $0$-preserving map which preserves $+$ and 
such that $a \ddd b$ if and only if $ \varphi (a) \ddd \varphi(b) $,
for all elements $a$ and $b$ in the domain.
 
 In the following theorems we shall deal with ``embeddings''
into models with further structures, e.~g., contact distributive lattices
or contact Boolean algebras. By a slight abuse of terminology,
when we say, for example,  
that  a weak contact semilattice $\mathbf S$ 
can be embedded into 
a contact Boolean algebra $\mathbf  B$, we mean that there is
an embedding from $\mathbf S$ to the  
\emph{reduct}  of $\mathbf  B$ in the language of contact semilattices.   
\arxiv{Notice that, in the above sense, embeddings
are never assumed to preserve meets, or other structure, apart from
 the join operation, the $0$  and the contact relation.} 
\end{definition} 

\arxiv{If $X$ is a topological space, the \emph{contact relation
associated to $X$} on $P(X)$ is defined by $a \ddd b$
if $Ka\cap Kb \neq \emptyset $, for $a, b \subseteq X$. 
More generally, if $\mathbf P$ is a poset with $0$ and with a closure
operation $K$, the associated contact relation is defined by
 $a \ddd b$ if there is $n \in P$, $n >0$ such that both
 $n \leq Ka$ and $n \leq Kb$.}  

\begin{theorem} \labbel{thm}
If $\mathbf S$ is a weak contact semilattice, then the following conditions are equivalent.
 \begin{enumerate}
   \item
$\mathbf S$  can be embedded into 
a  Boolean algebra with overlap contact.
 \item[(1$'$)]
$\mathbf S$  can be embedded into 
 a  Boolean algebra with additive contact.   
\item
$\mathbf S$  can be embedded into 
a  distributive lattice with overlap contact.  
\item[(2$'$)]
$\mathbf S$  can be embedded into 
 an additive contact distributive lattice.  
\item
$\mathbf S$ satisfies \eqref{d1} and \eqref{d2}.
 \item
$\mathbf S$  can be embedded into 
a complete atomic Boolean algebra with overlap contact.  
\arxiv{
\item
$\mathbf S$  can be embedded into 
the contact semilattice associated to some topological space.
\item
$\mathbf S$  can be embedded into 
the contact semilattice associated to some distributive
lattice with additive closure. }   
\end{enumerate}  
\end{theorem}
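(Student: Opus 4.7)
The plan is to close a cycle of implications, with the technical content concentrated in $(3) \Rightarrow (4)$. The chain $(4) \Rightarrow (1) \Rightarrow (2) \Rightarrow (2')$ is immediate, since a complete atomic Boolean algebra is in particular a Boolean algebra, a Boolean algebra is a distributive lattice, and overlap contact on any distributive lattice is additive by Lemma~\ref{distlem}; the same lemma gives $(1) \Rightarrow (1') \Rightarrow (2')$. For the arxiv items, $(4) \Rightarrow (5)$ and $(4) \Rightarrow (6)$ are handled by endowing the base set of the complete atomic Boolean algebra $\mathcal P(X)$ with the discrete topology or the identity closure operation, while $(5) \Rightarrow (1')$ and $(6) \Rightarrow (2')$ follow from the observation that any closure operation preserving finite joins makes the induced contact additive.

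For the implication from any embedding condition into $(3)$ it suffices, by universality, to verify \eqref{d1} and \eqref{d2} inside every additive contact distributive lattice. \eqref{d1} is immediate: $c_0 \nd c_1$ forces $c_0 \wedge c_1 = 0$ (otherwise \eqref{ref} together with \eqref{ext} would give $c_0 \ddd c_1$), and then by distributivity $b \leq (a+c_0) \wedge (a+c_1) = a + (c_0 \wedge c_1) = a$. For \eqref{d2} I plan to argue the contrapositive by induction on $n$, the inductive step using \eqref{add} to decompose $a$ and $b$ along $c_{n,0}, c_{n,1}$, distributivity together with the identity $c_{n,0} \wedge c_{n,1}=0$, and a case analysis feeding back into the $(n-1)$-st inductive hypothesis.

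The main obstacle is $(3) \Rightarrow (4)$, the representation theorem itself. The plan is to take $X$ to be the collection of maximal pairwise-contact subsets of $\mathbf S$ --- subsets $A \subseteq S$ with $x \ddd y$ for all $x, y \in A$, maximal with this property --- and to define $\varphi \colon S \to \mathcal P(X)$ by $\varphi(s) = \{A \in X : s \in A\}$. Each such $A$ is automatically upward closed by \eqref{ext} and proper. The core structural claim is that \eqref{d1+} (from Lemma~\ref{lem}) together with \eqref{d2} force every maximal $A$ to be join-prime, in the sense that $u+v \in A \Rightarrow u \in A$ or $v \in A$; this makes $\varphi$ a $+$-homomorphism sending $0$ to $\emptyset$, and the pairwise-contact condition then yields $\varphi(a) \cap \varphi(b) \neq \emptyset \Rightarrow a \ddd b$ automatically. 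The converse of this contact correspondence, together with the injectivity of $\varphi$, constitutes the key separation lemma: given $a \ddd b$ (respectively $a \not\leq b$), produce $A \in X$ containing both $a, b$ (respectively containing $a$ but avoiding $b$), again by Zorn applied to the appropriate pairwise-contact poset seeded with the relevant initial data. The hardest step will be the join-primeness argument at the maximum: when $u+v \in A$ but $u, v \notin A$, the one-at-a-time extension obstructions produce witnesses $z_u, z_v \in A$ with $u \nd z_u$ and $v \nd z_v$, and it is the combinatorial configuration of these four elements --- together with the forced contact conditions $(u+v) \ddd z_u$, $(u+v) \ddd z_v$, $z_u \ddd z_v$ inside $A$ --- that \eqref{d2} is engineered to contradict. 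Calibrating the axioms to this precise combinatorial pattern is the technical heart of the proof.
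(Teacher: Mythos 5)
There is a genuine gap at the technical heart of your plan, namely the implication from \eqref{d1} and \eqref{d2} to the representation (your $(3)\Rightarrow(4)$). Your core structural claim --- that \eqref{d1+} and \eqref{d2} force every maximal pairwise-contact subset $A\subseteq S$ to be join-prime --- is false, and it already fails in the most favourable case. Take $\mathbf S=\mathcal P(\{1,2,3\})$ with the overlap contact \eqref{ove}; this is a Boolean algebra with overlap contact, so it certainly satisfies \eqref{d1} and \eqref{d2}. The family $A=\{\{1,2\},\{1,3\},\{2,3\},\{1,2,3\}\}$ is pairwise in contact and maximal (each singleton is disjoint from some member of $A$), yet $\{1,2\}=\{1\}+\{2\}\in A$ while $\{1\},\{2\}\notin A$. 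Consequently your map $\varphi(s)=\{A\in X: s\in A\}$ is not even a $+$-homomorphism into $(\mathcal P(X),\cup)$: the clique $A$ lies in $\varphi(\{1\}+\{2\})$ but in neither $\varphi(\{1\})$ nor $\varphi(\{2\})$. The configuration of witnesses $z_u,z_v$ you describe ($u\nd z_u$, $v\nd z_v$, $u+v\ddd z_u$, etc.) is realized here with no contradiction, so no calibration of \eqref{d2} against that pattern can succeed. Restricting $X$ to the join-prime maximal cliques (clans) is the standard repair, but then the existence of enough such objects for injectivity and for reflecting $\nd$ becomes the whole difficulty, and Zorn on pairwise-contact sets does not produce them; this is essentially the route of \cite{I}, and it is substantially harder than what you sketch.

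The paper avoids point-set representations entirely. For $(3)\Rightarrow(1)$ it embeds $\mathbf S$ into $\mathcal P(S)$ by $\varphi(a)=\{x\in S: a\not\leq x\}$ (a join-embedding for free), passes to the quotient $\mathbf A=\mathcal P(S)/\mathcal I$ where $\mathcal I$ is the ideal generated by the sets $\varphi(c)\cap\varphi(d)$ with $c\nd d$, and equips $\mathbf A$ with overlap contact; then \eqref{d1} (via Lemma \ref{lem}) yields injectivity of the composite, and \eqref{d2} is exactly what is needed to show that $\varphi(a)\cap\varphi(b)\notin\mathcal I$ when $a\ddd b$. Clause (4) is then obtained from (1) by the standard completion of a Boolean algebra, not directly. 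Your reductions among (1), (1$'$), (2), (2$'$), (4) and your verification of \eqref{d1} in an additive contact distributive lattice agree with the paper; your treatment of \eqref{d2} there is only a sketch (the paper uses additivity to shrink $a$ and $b$ below single summands and then a choice-of-$f$ pigeonhole argument rather than an induction on $n$), but the decisive defect is the representation step.
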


N.B.: A weak contact on a Boolean algebra 
or on a distributive lattice
is not necessarily additive: see
Example \ref{ex}(c) below.
Hence the additivity assumption is necessary
in clauses (1$'$) and (2$'$). Compare Theorem \ref{thmb} below.

\begin{proof}
(1) $\Rightarrow $  (1$'$) $\Rightarrow $  (2$'$)
and
 (1) $\Rightarrow $  (2) $\Rightarrow $  (2$'$)
are either trivial or   immediate from Lemma \ref{distlem}.

(2$'$) $\Rightarrow $  (3)
Suppose that $\iota : \mathbf S \to \mathbf T$ 
is an embedding given by (2$'$) and 
$\mathbf T$ has the structure of a distributive lattice. 
If $a,b ,  c_{0}, c_{1} \in S$
and   $b \leq a +  c_{0} $,
$  b \leq a +  c_{1}$,
then 
$\iota (b) \leq \iota  (a) + \iota ( c_{0}) $ and 
$\iota (b) \leq \iota  (a) + \iota ( c_{1}) $,
since $\iota$ is a semilattice homomorphism.
Since $\mathbf T$ is a distributive lattice, 
 $\iota (b) \leq (\iota  (a) + \iota ( c_{0}))
(\iota  (a) + \iota ( c_{1}))= \iota  (a) + \iota ( c_{0})\iota ( c_{1}) $.
Since $\iota$ is an embedding, then from 
 $c_{0}\nd c_{1}$ we get 
$ \iota (c_{0}) \nd \iota (c_{1})$,
hence $\iota ( c_{0})\iota ( c_{1}) =0$,
by \eqref{ext} and \eqref{ref}. Thus $\iota (b) \leq \iota  (a)$,
hence $ b \leq a$, since $\iota$ is an embedding. 
This proves \eqref{d1}.

In order to prove \eqref{d2}, assume for simplicity that 
the given embedding is an inclusion, thus we can write, say, $a$
in place of $\iota(a)$.
 We first give a much simpler 
proof of \eqref{d2} under the stronger assumption that  
$\mathbf T$ is a distributive lattice with 
overlap contact, that is, assuming (2).  
Assume the hypotheses of \eqref{d2}
and assume that we are in a distributive lattice with overlap contact.
From \eqref{ref}, \eqref{ext} and   $ c_{1,0} \nd  c_{1,1}$,
 \dots,  $ c_{n,0} \nd  c_{n,1} $
we get $ c_{1,0} c_{1,1}=0 $, 
\dots,  $ c_{n,0}  c_{n,1} = 0$.
Then, by the assumptions and distributivity,
\begin{equation} \labbel{sim}     
ab \leq \prod _{f}  ( c_{1,f(1)}  + \dots + c_{n,f(n)})=
 c_{1,0} c_{1,1} + \dots + c_{n,0}  c_{n,1} =0,
 \end{equation}     
where $f$ varies among all functions from 
 $ \{ 1, \dots , n\} $ to  $ \{ 0, 1  \}$.
  Hence  $a \nd b$, since, by assumption, the relation  is the overlap contact.

 Now we prove \eqref{d2} under the assumption
that $\mathbf T$ is an additive contact  distributive lattice.
Assume the hypotheses of \eqref{d2}
and assume by contradiction that $b \ddd a$.
If, for some  $f: \{ 1, \dots , n\} \to \{ 0, 1  \}$, we have
$b \leq   c_{1,f(1)}  + \dots + c_{n,f(n)}$,
then $b = b c_{1,f(1)}  + \dots + b c_{n,f(n)}$,
by distributivity.
By $b \ddd a$, additivity and symmetry of $\delta$,
we get $b c_{i,f(i)} \ddd a$,
for some $i \leq n$.
Hence the counterexample works if we consider
$b c_{i,f(i)}$ in place of $b$.
Iterating the argument a finite number of times,
it is no loss of generality to assume that, 
for every  $f: \{ 1, \dots , n\} \to \{ 0, 1  \}$,
if $b \leq   c_{1,f(1)}  + \dots + c_{n,f(n)}$,
then $b \leq c_{i,f(i)}$, for some $i \leq n$
which depends on $f$.
Similarly, we can assume the same for $a$. 

Since $b \ddd a$, then $b>0$ and $a>0$, by 
\eqref{emp}. Given some $i \leq n$,
we cannot have both
   $b \leq c_{i,0}$
and
$b \leq c_{i,1}$, by \eqref{ref} and \eqref{ext},
since $c_{i,0} \nd c_{i,1}$. 
We cannot have both
   $b \leq c_{i,0}$
and
$a \leq c_{i,1}$, by \eqref{ext}, since 
$b \ddd a$.
Together with the symmetric arguments,
this shows that, for every $i \leq n$, 
there is at most one between $c_{i,0}$
and $c_{i,1}$ which contains  one 
between $a$ and  $b$.
Choose a function $f: \{ 1, \dots , n\} \to \{ 0, 1  \}$
in such a way that, for every $i \leq n$,
neither $b \leq c_{i,f(i)}$,
nor $a \leq c_{i,f(i)}$.
By   \eqref{d2}, either $b \leq   c_{1,f(1)}  + \dots + c_{n,f(n)}$,
 or $a \leq   c_{1,f(1)}  + \dots + c_{n,f(n)}$.
By the assumptions in the previous paragraph, correspondingly, either
$b \leq c_{i,f(i)}$
or $a \leq c_{i,f(i)}$, for some $i \leq n$,   contradicting
the choice of $f$.

(3) $\Rightarrow $  (1)
Suppose that $\mathbf S = (S, {\leq}, 0,  { \delta  } )$ is a 
weak contact semilattice
satisfying \eqref{d1} and \eqref{d2}.
Consider the  Boolean algebra 
$\mathbf  B = (\mathcal P(S), {\cup}, {\cap}, \emptyset, S, {\complement})$ 
and let  $\varphi: P \to \mathcal P(S)$  
be the semilattice embedding
defined by $\varphi(a) = \nup a = 
\{ \, x \in S  \mid
  a \centernot \leq x\, \} $.
Notice that $\varphi(0) = \emptyset $.  
Let $\mathbf A$ be the quotient  $\mathbf  B/ \mathcal I $,
where $\mathcal I$ is the ideal of 
$\mathbf  B$ generated by the set of all the elements of the form
$\varphi(c) \cap \varphi(d)$, with $c,d \in S$ and $c \nd d$.
If $\pi: \mathbf  B \to \mathbf A$  
  is the quotient homomorphism, then $\kappa= \varphi \circ \pi$
is a  semilattice homomorphism from 
$\mathbf S$ to (the semilattice reduct of) $\mathbf A$.

Endow $\mathbf A$ with the overlap contact relation. 
It is enough to show that $\kappa$ is a contact embedding
from $\mathbf S$ to  $\mathbf A$.
We first need to check that $\kappa$ is injective.
 It is enough to show that
if $ \kappa (b) \leq \kappa  (a)$ in $\mathbf A$,
then $b \leq a$  
 in $\mathbf S$.
If $ \kappa (b) \leq \kappa  (a)$, then  
 $\varphi(b) \subseteq  \varphi (a) \cup i$,
for some $i \in \mathcal I$, that is,
\begin{equation}\labbel{1}
    \varphi(b) \subseteq  \varphi (a) \cup 
(\varphi(c_{1,0}) \cap \varphi(c_{1,1})) \cup \dots \cup 
(\varphi(c_{n,0}) \cap \varphi(c_{n,1})),
   \end{equation}
 for some 
 $n \in \mathbb N$ 
and $c_{1,0}, \dots, c_{n,1} \in S$ such that 
 $c_{1,0} \nd c_{1,1}$, \dots,  $c_{n,0} \nd c_{n,1}$.  
By distributivity, \eqref{1} reads
\begin{equation*}\labbel{2}
\varphi (b) \subseteq \bigcap _{f: \{ 1, \dots , n\} \to \{ 0, 1  \}}
( \varphi (a) \cup \varphi (c_{1,f(1)})  \cup \dots \cup 
\varphi (c_{n,f(n)})),   
   \end{equation*}     
  which holds if and only if 
\begin{equation*}\labbel{3}
\varphi (b) \subseteq 
 \varphi (a) \cup \varphi (c_{1,f(1)})  \cup \dots \cup 
\varphi (c_{n,f(n)}), \text{ for all $f: \{ 1, \dots , n\} \to \{ 0, 1  \}$,}    
   \end{equation*}     
if and only if in $\mathbf S$ 
\begin{equation*}\labbel{4}
b \leq 
a +  c_{1,f(1)}  + \dots + 
 c_{n,f(n)}, \text{ for all $f: \{ 1, \dots , n\} \to \{ 0, 1  \}$,}    
   \end{equation*}     
since $\varphi$  is a semilattice embedding. By Lemma \ref{lem},
$\mathbf S$ satisfies \eqref{d1+}, hence $b \leq a$.    

 Next, we show that $\kappa$ is a $\delta$-embedding.
If $a, b \in S$ and $a \nd b$, then $\kappa(a)  \kappa(b)=0$,
since, by definition, $\varphi(a) \cap \varphi(b) \in \mathcal I$.
Hence $\kappa(a) \nd  \kappa(b)$, since $\delta$ is 
the overlap contact on $\mathbf A$.
For the converse, assume that  $a, b \in S$ and $a \ddd b$,
we need to show that $\kappa(a)  \ddd \kappa(b)$ in $\mathbf A$,
that is, $\kappa(a)  \kappa(b) > 0$, since $\delta$ is 
the overlap contact on $\mathbf A$.
This means $ \varphi (a) \cap  \varphi (b) \notin \mathcal I$.
Assume to the contrary
that $\varphi (a) \cap  \varphi (b) \in \mathcal I$,
that is, 
\begin{equation}\labbel{1b}
   \varphi (a) \cap \varphi(b) \subseteq   
(\varphi(c_{1,0}) \cap \varphi(c_{1,1})) \cup \dots \cup 
(\varphi(c_{n,0}) \cap \varphi(c_{n,1})),
   \end{equation}
 for some $n \in \mathbb N$ 
and $c_{1,0}, \dots, c_{n,1} \in S$ such that 
 $c_{1,0} \nd c_{1,1}$, \dots,  $c_{n,0} \nd c_{n,1}$. 
Notice that, since  $a \ddd b$,
then $a,b >0$, by \eqref{emp}, hence 
  $   \varphi (a) \cap \varphi(b) \neq \emptyset $,
since  $0 \in \varphi (a) \cap \varphi(b)$.
Hence $n \geq 1$ in \eqref{1b}.   
Arguing as in the proof of injectivity of $\kappa$,
the inclusion \eqref{1b}  means
\begin{equation*}\labbel{3b}
\varphi (a) \cap \varphi(b) \subseteq 
 \varphi (c_{1,f(1)}+ \dots + 
c_{n,f(n)}), \text{ for all $f: \{ 1, \dots , n\} \to \{ 0, 1  \}$.}    
   \end{equation*}     
By taking complements, recalling that  $\varphi(a)= \nup a$
and setting  $\ua a = \{ \, x \in S  \mid
 x \geq a \, \} $, we obtain
\begin{equation*}\labbel{4b}
\ua a \cup \ua  b \supseteq 
 \ua  (c_{1,f(1)}+ \dots + 
c_{n,f(n)}), \text{ for all $f: \{ 1, \dots , n\} \to \{ 0, 1  \}$,}    
   \end{equation*}     
that is, 
\begin{align*}
&\text{for every $f: \{ 1, \dots , n\} \to \{ 0, 1  \}$, either}
\\
& c_{1,f(1)}+ \dots + c_{n,f(n)} \geq a, \text{ or } 
 c_{1,f(1)}+ \dots + c_{n,f(n)} \geq b.
 \end{align*} 
   
By \eqref{d2}, this implies $a \nd b$,
contradicting $a \ddd b$.   

(4) $\Rightarrow $  (1) is trivial. As far as
(1) $\Rightarrow $  (4) is concerned, notice that, as well-known, every 
Boolean algebra can be extended to a complete atomic Boolean algebra.
Embed the algebra given by (1) into a  complete atomic Boolean algebra,
and give this larger algebra, too, the overlap contact relation.
Since Boolean embeddings preserve meets and since, with overlap contact
relations, $a \ddd b$ is equivalent to 
 $ab>0$
(in lattices, hence in Boolean algebras),
then the embedding preserves the contact, too.   

\arxiv{ (4) $\Rightarrow $  (5) Since
a complete atomic Boolean algebra $\mathbf  B$ is isomorphic
to a field of sets, say, $\mathcal P(X)$,
if we give $X$ the discrete topology,
the overlap contact on $\mathbf  B$ is the same as the contact induced
by the topology on $X$.

(5) $\Rightarrow $  (6)  $\Rightarrow $  (2$'$) are trivial.
Indeed, in a distributive lattice with an
additive closure operation $K$, the associated
contact relation is additive.}  
\end{proof}

\begin{remarks} \labbel{bou} 
(a) The  proof
of Theorem \ref{thm} (3) $\Rightarrow $   (1),
as given,
does not work for bounded semilattices, i.e., semilattices with also a maximum $1$,
which is supposed to be preserved by homomorphisms.
However, if $\mathbf S$  has a maximum $1$, it is enough to consider 
 as $\mathbf  B$ the Boolean algebra of subsets of $\mathbf S \setminus \{ 1 \} $
in the above proof, in order show that Theorem \ref{thm}
holds for bounded semilattices, as well.  

(b)   For bounded semilattices, the implication (2$'$) $\Rightarrow $   (4)
 (hence the equivalence of (1) - (2$'$) and (4))
 in Theorem \ref{thm}
 can
be obtained also as a consequence of \cite[Proposition  3.8
and Theorem  5.6]{I}. 
\end{remarks}

Ivanova \cite[Definition 3.1]{I} introduced \emph{Contact join semilattices},
which are bounded semilattices with a binary relation
satisfying \eqref{emp}, \eqref{sym}
((C1) and (C4) in the terminology from \cite{I}),
as well as some further conditions 
$(C^1_{m,i})$ and   $(C_{n,i})$,
which we will not report here.
Then in \cite[Theorem 5.6]{I} a representation
theorem is proved, corresponding to 
clause (4) in Theorem \ref{thm} here. 
Hence, by Remarks \ref{noti}  and \ref{bou}(a),  we get the following corollary,
which provides an alternative axiomatization for
Ivanova's Contact join semilattices.
 
\begin{corollary} \labbel{iv}
A bounded semilattice $\mathbf S$ with a further binary relation is a
Contact join semilattices in the sense of \cite[Definition 3.1]{I}
if and only if $\mathbf S$ satisfies \eqref{d1} and \eqref{d2}. 
 \end{corollary}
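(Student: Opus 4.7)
The plan is to obtain the corollary by combining three ingredients already available in the paper: Theorem \ref{thm} characterizing (D1)+(D2) via embeddability into a complete atomic Boolean algebra with overlap contact, Remark \ref{bou}(a) extending this to the bounded setting, and the representation theorem \cite[Theorem 5.6]{I}, which asserts (in our terminology) that a bounded semilattice with a binary relation is a Contact join semilattice in the sense of \cite[Definition 3.1]{I} if and only if it is embeddable (as a bounded semilattice, with the relation preserved and reflected) into a complete atomic Boolean algebra equipped with the overlap contact.

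For the direction from (D1)+(D2) to being a Contact join semilattice, I would first invoke Remark \ref{noti} to verify that (D1) and (D2) already force the weak contact axioms (Sym), (Emp), (Ext) and (Ref); so $\mathbf S$ is genuinely a weak contact semilattice to which Theorem \ref{thm} applies. Then the implication (3)$\Rightarrow$(4) of Theorem \ref{thm}, adapted to bounded semilattices as explained in Remark \ref{bou}(a) (i.e.\ use the Boolean algebra $\mathcal P(S\setminus\{1\})$ in the construction so that the maximum is preserved), embeds $\mathbf S$ into a complete atomic Boolean algebra with overlap contact. By the ``if'' direction of \cite[Theorem 5.6]{I}, $\mathbf S$ is a Contact join semilattice.

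For the converse, assume $\mathbf S$ is a Contact join semilattice in Ivanova's sense. The ``only if'' direction of \cite[Theorem 5.6]{I} provides an embedding of $\mathbf S$, as a bounded semilattice with contact relation, into a complete atomic Boolean algebra with overlap contact. This is exactly condition (4) of Theorem \ref{thm}, so by (4)$\Rightarrow$(3) we conclude that $\mathbf S$ satisfies (D1) and (D2).

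The step I expect to require the most care is the bookkeeping around the bounded setting: one must confirm that Ivanova's embedding and the embedding produced in Theorem \ref{thm} both respect the maximum $1$, so that the two notions of ``embeddability into a complete atomic overlap Boolean algebra'' genuinely coincide. Remark \ref{bou}(a) handles this on our side, and Ivanova's representation is formulated in the bounded category, so the match is clean; there are no further hidden axioms to check, since Remark \ref{noti} already takes care of (Sym) and (Emp), which are the only weak contact properties explicitly required in \cite[Definition 3.1]{I}.
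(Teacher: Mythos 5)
Your proposal is correct and follows essentially the same route as the paper, which derives the corollary by combining Ivanova's representation theorem (identified with clause (4) of Theorem \ref{thm}) with the equivalence (3)$\Leftrightarrow$(4), using Remark \ref{noti} to recover the weak contact axioms from \eqref{d1} and \eqref{d2} and Remark \ref{bou}(a) to handle the bounded setting. Your write-up merely makes explicit the bookkeeping that the paper leaves implicit.
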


\section{Embedding contact semilattices into 
(nonoverlap) Boolean algebras} \labbel{emb2}

\arxiv{Condition  \eqref{d2} is not needed} 
in order to get that
a weak contact semilattice can be embedded into
a weak contact distributive lattice (possibly, with non-overlap 
weak contact relation).
Recall the conventions on embeddings
from Definition \ref{emb}. 

\begin{theorem} \labbel{thmb}
If $\mathbf S$ is a weak contact semilattice, then the following conditions are equivalent.
 \begin{enumerate}
   \item
$\mathbf S$  can be embedded into 
a weak contact Boolean algebra.
\item
$\mathbf S$  can be embedded into 
a weak contact distributive lattice.
\item
\arxiv{$\mathbf S$ satisfies \eqref{d1}.}
  \item
$\mathbf S$  can be embedded into 
a weak contact complete atomic Boolean algebra.  
 \end{enumerate}  
\end{theorem}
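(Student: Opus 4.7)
The plan is to prove the cycle (4) $\Rightarrow$ (1) $\Rightarrow$ (2) $\Rightarrow$ (3) $\Rightarrow$ (4). The first two arrows are immediate, while (2) $\Rightarrow$ (3) is precisely the \eqref{d1}-half of the (2$'$) $\Rightarrow$ (3) argument for Theorem \ref{thm}: given a semilattice embedding $\iota \colon \mathbf S \to \mathbf T$ into a weak contact distributive lattice, if $b \leq a + c_0$, $b \leq a + c_1$ in $\mathbf S$ with $c_{0} \nd c_{1}$, then by distributivity $\iota(b) \leq \iota(a) + \iota(c_0) \iota(c_1)$, and $\iota(c_0) \iota(c_1) = 0$ since otherwise \eqref{ref} and \eqref{ext} in $\mathbf T$ would force $\iota(c_0) \ddd \iota(c_1)$, contradicting $c_0 \nd c_1$; hence $\iota(b) \leq \iota(a)$ and $b \leq a$.

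For (3) $\Rightarrow$ (1), I would reuse the Boolean algebra $\mathbf A = \mathcal P(S)/\mathcal I$ and the map $\kappa = \pi \circ \varphi$ from the proof of (3) $\Rightarrow$ (1) in Theorem \ref{thm}; the injectivity of $\kappa$ is the same argument, invoking only Lemma \ref{lem} and hence only \eqref{d1}. What must change is the contact on $\mathbf A$: in place of the overlap relation, define
\[
x \ddd_A y \iff x, y > 0 \text{ and, for all } a, b \in S \text{ with } x \leq \kappa(a) \text{ and } y \leq \kappa(b),\ a \ddd b.
\]
Symmetry, emptiness, and extensionality are immediate. Reflexivity holds because the very construction of $\mathcal I$ ensures that $c \nd d$ implies $\kappa(c) \kappa(d) = 0$ in $\mathbf A$; so if $x > 0$ lay below both $\kappa(a)$ and $\kappa(b)$ with $a \nd b$, then $x$ would be below $0$, a contradiction. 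That $\kappa$ both preserves and reflects $\ddd$ follows in a few lines: $a \ddd b$ gives $\kappa(a) \ddd_A \kappa(b)$ by combining the order-embedding property of $\kappa$ with \eqref{ext} in $\mathbf S$, while $a \nd b$ yields $\kappa(a) \nd_A \kappa(b)$ upon testing the universal clause with $a, b$ themselves as witnesses.

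To conclude (1) $\Rightarrow$ (4), I would Stone-embed the weak contact Boolean algebra just obtained into the powerset $\mathcal P(X)$ of its Stone space, writing $a \mapsto \hat a$, and extend the contact to $\mathcal P(X)$ by
\[
U \ddd_C V \iff U \cap V \neq \emptyset \text{ or } \exists a, b \in \mathbf A \text{ with } \hat a \subseteq U,\ \hat b \subseteq V,\ a \ddd_A b.
\]
Reflexivity is free from the first disjunct, and the remaining axioms are routine. The Stone embedding preserves $\ddd$ using $\hat a, \hat b$ themselves as witnesses; conversely, if $a \nd_A b$ but $\hat a \ddd_C \hat b$, then either $\hat a \cap \hat b \neq \emptyset$, hence $a \wedge b > 0$ and so $a \ddd_A b$ (every weak contact extends overlap, by \eqref{ref} and \eqref{ext}), or one gets $a', b'$ with $a' \leq a$, $b' \leq b$, $a' \ddd_A b'$, again forcing $a \ddd_A b$ by \eqref{ext}; either way a contradiction. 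The main obstacle compared with Theorem \ref{thm} is precisely the design of the two non-overlap contact relations above: the universal clause of $\ddd_A$ is what makes \eqref{ref} compatible with the quotient, and the disjunctive clause of $\ddd_C$ is what lets the Stone extension carry a weak contact consistent with $\ddd_A$.
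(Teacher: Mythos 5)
Your proposal is correct and follows the paper's overall skeleton: the same cycle of implications, the same quotient algebra $\mathbf A=\mathcal P(S)/\mathcal I$ with injectivity of $\kappa$ obtained from \eqref{d1+}, and the same two-clause extension of the contact in the passage to a complete atomic Boolean algebra. The one genuine departure is your contact relation on $\mathbf A$ in (3) $\Rightarrow$ (1): the paper takes the \emph{existential} (smallest admissible) relation, declaring $x \ddd_{\mathbf A} y$ when $xy>0$ or some $a \ddd b$ in $S$ satisfy $\kappa(a)\le x$ and $\kappa(b)\le y$, whereas you take the \emph{universal} (largest admissible) one, declaring $x \ddd_{\mathbf A} y$ when $x,y>0$ and every pair $a,b$ with $x\le\kappa(a)$, $y\le\kappa(b)$ has $a\ddd b$. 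The trade-off is real: with your definition, reflection of non-contact is immediate (test the clause at $a,b$ themselves), while preservation needs the order-embedding property of $\kappa$ (already available from the injectivity argument) together with \eqref{ext}; with the paper's definition preservation is immediate, but reflection requires rerunning the ideal-membership computation $\varphi(a)\subseteq\varphi(c)\cup\cdots$ and invoking \eqref{d1+} to get $a\le c$ and $b\le d$. Your route is thus marginally more economical at that point, since it reuses the order-embedding fact rather than repeating the calculation; both versions ultimately rest on the same key consequence of the construction, namely that $c\nd d$ forces $\kappa(c)\kappa(d)=0$. The axiom verifications you sketch --- in particular \eqref{ref} for the universal relation, and the two cases in the Stone-extension step of (1) $\Rightarrow$ (4) --- are all sound.
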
 

\begin{proof}
(1) $\Rightarrow $  (2) and 
(4) $\Rightarrow $  (1) 
are trivial.

 The proof that $\mathbf S$ satisfies  \eqref{d1} 
in Theorem \ref{thm} (2$'$) $\Rightarrow $  (3) does not use
additivity and does not use
the assumption that $\delta$ is the overlap relation, hence the corresponding
implication holds in the present case, as well. 
\arxiv{This shows (2) $\Rightarrow $  (3).} 

(3) $\Rightarrow $  (1) Define the Boolean algebras $\mathbf  B$, $\mathbf A$ 
and the homomorphisms $\varphi$, $\pi$ and $\kappa$ 
as in the corresponding case in the proof of Theorem \ref{thm}.
Since the proof there that $\kappa$ is injective uses only 
\eqref{d1}, we get that $\kappa$ is injective in the present case, as well. 
 The definition of the weak contact in $\mathbf A$ needs to be modified
in the present situation. For $x,y \in A$, let  $ x \ddd  _{\mathbf A} y$ 
if either $xy > 0$, or there are $a,b  \in S$ such that $a \ddd b$
and $ \kappa (a) \leq x$, $\kappa(b) \leq y$.    

The properties \eqref{sym}, \eqref{ext} and \eqref{ref}
for  $\ddd  _{\mathbf A}$ are immediate. 
We have already proved that $\kappa$ 
is an embedding, hence if $a \neq 0$,
then $\kappa(a) \neq 0$, thus \eqref{emp} holds in
$\mathbf A$, since it holds in $\mathbf S$.

It remains to prove that $\kappa$ is a $\delta$-embedding.
If $c \ddd d$, then 
$ \kappa (c) \ddd  _{\mathbf A} \kappa(d) $
by 
the definition of $\ddd  _{\mathbf A}$.
On the other hand, suppose $c \nd d$ and,
by contradiction,  $ \kappa (c) \ddd  _{\mathbf A} \kappa(d) $.
Since $c \nd d$, then $\kappa (c) \kappa (d)=0$,
by the definition of $\mathcal I$ in the proof of
Theorem \ref{thm} (3) $\Rightarrow $  (1).   
Hence, by the definition of $\ddd  _{\mathbf A}$,
 there are $a,b \in S$ such that $a \ddd b$
and $ \kappa (a) \leq \kappa (c)$, $\kappa(b) \leq \kappa (d)$.
This means $\varphi (a) \subseteq \varphi (c) \cup 
(\varphi(c_{1,0}) \cap \varphi(c_{1,1})) \cup \dots \cup 
(\varphi(c_{n,0}) \cap \varphi(c_{n,1}))$, for elements
$c_{1,0}, \dots , c_{n,1} \in S$
such that 
$c_{1,0} \nd c_{1,1}$, \dots,  $c_{n,0} \nd c_{n,1}$,
and 
$\varphi (b) \subseteq \varphi (d) \cup 
(\varphi(d_{1,0}) \cap \varphi(d_{1,1})) \cup \dots \cup 
(\varphi(d_{i,0}) \cap \varphi(d_{i,1}))$, for elements
$d_{1,0}, \dots $ satisfying the corresponding properties.
As custom by now,  we get
$\varphi (a)  \subseteq \varphi (c) \cup 
 \varphi (c_{1,f(1)}) \cup  \dots \cup 
\varphi (c_{n,f(n)}) $. 
Since $\varphi$  is a semilattice embedding, 
then 
$a \leq c +   c_{1,f(1)} +   \dots + c_{n,f(n)} $
in $\mathbf S$, for every $f: \{ 1, \dots , n\} \to \{ 0, 1  \}$.  
\arxiv{By \eqref{d1+}, $a \leq c $ and similarly  
$b \leq d$, thus $a \nd b$ by \eqref{ext}, a contradiction. }

(1) $\Rightarrow $  (4)
Suppose that $\mathbf S$ can be embedded in 
a weak contact Boolean algebra $\mathbf A$.
Embed (the Boolean reduct of) $\mathbf A$ into 
some atomic complete Boolean algebra $\mathbf  C$
by, say, a  Boolean embedding $\chi$. 
Let $u \ddd v$ in $\mathbf  C$ 
if either $uv>0$, or $u\geq \chi(a)$ and   $v\geq \chi(b)$,
for some $a, b \in A$ such that $a \ddd b$.
If $c,d \in A$ and $c \ddd d$, then 
$\chi(c) \ddd \chi(d)$ in $\mathbf  C$, by definition.
 If $c,d \in A$ and $c \nd d$, then 
$cd=0$, by \eqref{ext} and \eqref{ref},
thus   $\chi(c)  \chi(d) =0$, since $\chi$
is a Boolean homomorphism. If, by contradiction,
$\chi(c) \ddd  \chi(d)$, then
$\chi(c)\geq \chi(a)$ and   $ \chi(d)\geq \chi(b)$,
for some $a, b \in A$ such that $a \ddd b$.
Since $\chi$  is a Boolean embedding, $c \geq a$ and
$d \geq b$, thus $c \ddd d$, a contradiction.
We have proved that $\chi$ is a $\delta$-embedding.    
It is elementary to see that $ \ddd$ is a weak contact
relation on $\mathbf  C$, hence, by composing the two
embeddings, we get an embedding of $\mathbf S$ into
the atomic and complete weak contact Boolean algebra $\mathbf  C$. 
\end{proof}

\begin{remark} \labbel{ac} 
   The implications (1) $\Rightarrow $  (4)  in
Theorems \ref{thm} and \ref{thmb}  
are the only parts   in the present note
in which we have used 
a consequence of the axiom of choice, namely, \cite[Form 14 B]{HR}.
\arxiv{This remark involves also clause (5) in Theorem \ref{thm}.} 

Formally, a version of choice is used also in the proof of Corollary \ref{iv},
but we expect that  a more direct and choice-free proof can be found. 
\end{remark}

\section{Further remarks and some examples} \labbel{exe} 

 Let $T$ be the 
theory of Boolean algebras with an overlap  contact relation.  
The representation Theorem \ref{thm} 
can be used in order to provide
a characterization of the set of universal consequences of $T$ 
in the language of contact semilattices.
For the statement of the
next theorem, observe that clause \eqref{d1} can be expressed as a first-order
universal sentence and \eqref{d2} can be considered a universal
theory consisting of a countable set of sentences.  

\begin{corollary} \labbel{conuv}
Suppose that  $\varphi$  is a universal sentence
in the language of contact semilattices.
Then the following conditions are equivalent.
 \begin{enumerate}  
 \item 
$\varphi$  
is a logical consequence of the theory of Boolean algebras with
an overlap contact relation. 
\item
$\varphi$  is a logical  consequence of the theory
of distributive lattices with an additive contact relation.
\item
$\varphi$  is a logical  consequence of \eqref{d1} and \eqref{d2}. 
  \end{enumerate}  
Moreover,
the following conditions are equivalent.
 \begin{enumerate}
\setcounter{enumi}{3}
 \item 
$\varphi$  
is a logical  consequence of the theory of Boolean algebras with
a weak contact relation. 
\item
$\varphi$  is a logical  consequence of the theory
of distributive lattices with a weak contact relation.
\item
  \arxiv{$\varphi$  is a logical  consequence of \eqref{d1}.} 
\end{enumerate}
 \end{corollary}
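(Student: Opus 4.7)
The plan is to derive the corollary from Theorems \ref{thm} and \ref{thmb} together with the standard model-theoretic preservation fact that a universal sentence $\varphi$ holding in a structure $\mathbf B$ also holds in every substructure of $\mathbf B$. Since the notion of embedding from Definition \ref{emb} preserves $+$, $0$, $\delta$, and $\nd$ in both directions, it coincides with the model-theoretic notion of an embedding in the language of contact semilattices, and hence universal sentences transfer downwards along such embeddings. Once this is recorded, both equivalences reduce to routine applications of the representation theorems of Sections \ref{emb1} and \ref{emb2}.

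For the first equivalence, I would begin by observing that every Boolean algebra with overlap contact, viewed as a weak contact semilattice by reduct, satisfies \eqref{d1} and \eqref{d2}: this is the implication (1) $\Rightarrow$ (3) of Theorem \ref{thm} applied to the identity embedding, and the analogous statement for distributive lattices with additive contact follows from (2$'$) $\Rightarrow$ (3). Therefore the directions (3) $\Rightarrow$ (1) and (3) $\Rightarrow$ (2) are immediate: any logical consequence of \eqref{d1} and \eqref{d2} must hold in each structure of the specified kind. For (1) $\Rightarrow$ (3) and (2) $\Rightarrow$ (3), given any weak contact semilattice $\mathbf S$ satisfying \eqref{d1} and \eqref{d2}, Theorem \ref{thm} (3) $\Rightarrow$ (1) respectively (3) $\Rightarrow$ (2$'$) supplies an embedding into a Boolean algebra with overlap contact, respectively into a distributive lattice with additive contact; the universal sentence $\varphi$ then descends from the larger structure to $\mathbf S$ by the preservation fact above.

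The second equivalence follows the very same pattern with Theorem \ref{thmb} in place of Theorem \ref{thm}: \eqref{d1} alone characterizes the weak contact semilattices embeddable into weak contact Boolean algebras and into weak contact distributive lattices, so $\varphi$ is a consequence of either of these two theories if and only if it is a consequence of \eqref{d1} taken together with the weak contact semilattice axioms. I do not anticipate any substantive obstacle: once Theorems \ref{thm} and \ref{thmb} are available, the corollary becomes a purely formal consequence, and the argument is uniform in the universal sentence $\varphi$. The one bookkeeping detail worth explicitly noting is that the embeddings of Definition \ref{emb} are genuine model-theoretic substructures in the language $\{+, 0, \delta\}$, which is immediate since by definition they preserve both $\delta$ and its negation.
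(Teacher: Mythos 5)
Your proposal is correct and follows essentially the same route as the paper: both arguments combine the representation Theorems \ref{thm} and \ref{thmb} with the downward preservation of universal sentences along the (model-theoretic) embeddings of Definition \ref{emb}, the only cosmetic difference being that the paper closes a cycle (1) $\Rightarrow$ (3) $\Rightarrow$ (2) $\Rightarrow$ (1) while you prove the biimplications with (3) directly. The paper additionally invokes Remark \ref{noti} to note that \eqref{d1} and \eqref{d2} already entail the weak contact axioms, so that Theorem \ref{thm} applies to an arbitrary model of (3); this is a small bookkeeping point your version implicitly assumes by restricting to weak contact semilattices.
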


 \begin{proof} 
(1) $\Rightarrow $  (3). 
By Remark \ref{noti} and Theorem \ref{thm} (3) $\Rightarrow $  (1),
if some semilattice $\mathbf S$ with a binary relation satisfies 
\eqref{d1} and \eqref{d2}, then $\mathbf S$ can be embedded
into some  Boolean algebra $\mathbf  B$ with
overlap contact relation. This means that $\mathbf S$ is isomorphic
to some substructure of a reduct of $\mathbf  B$, hence $\mathbf S$ 
satisfies all the universal sentences satisfied by this reduct of $\mathbf  B$.
Thus if $\varphi$  is a universal consequence of the 
theory of Boolean algebras  with
an overlap contact relation, then 
 every model of \eqref{d1} and \eqref{d2}
satisfies $\varphi$, since $\varphi$  is in the language of contact semilattices.
 Then the  
completeness theorem implies that $\varphi$ 
is a consequence of \eqref{d1} and \eqref{d2}.

(3) $\Rightarrow $  (2) By Theorem \ref{thm} (2$'$) $\Rightarrow $  (3), every
  distributive lattices with an additive contact relation satisfies
\eqref{d1} and \eqref{d2}.

(2) $\Rightarrow $  (1) is trivial.

The equivalences of (4) - (6) is proved in a similar way, using
Theorem \ref{thmb}.  
\end{proof}  

By Example \ref{ex}(c) below, the set of formulas $\varphi$ 
for which (1) - (3) hold in the previous theorem
is distinct  from the set of formulas $\varphi$ 
for which (4) - (6) hold.

Corollary \ref{conuv} can be reformulated to deal
only with the language of contact semilattices.
The theory of Boolean algebras can be expressed
in the language of join semilattices, asserting the existence
of meets by means of a $\forall\exists\forall$ sentence, and similarly
for complementation.
Under the above axiomatization, the set of universal consequences of the theory
of Boolean algebras with an overlap (weak contact) relation
is the set of universal consequences of \eqref{d1} and \eqref{d2}
\arxiv{(\eqref{d1} alone). }

\begin{examples} \labbel{ex} 
Let   $\mathbf M_3$ be the $5$-element
modular lattice  with $3$ atoms $a$, $b$ and $c$.

(a) 
If $\mathbf M_3$  is given the  overlap contact relation,
then $\mathbf M_3$  is a weak not additive  contact lattice.
Similarly if we set $a \ddd b$, $c \nd a$, $c \nd b$
and the symmetrical relations.   
Indeed, $c \ddd a+b=1$.

(b) In  $\mathbf M_3$ set 
$a \ddd b$, $a \ddd c$, $b \nd c$ and symmetrically.
Call the resulting model  $\mathbf M_3^ \delta $.
Then $\mathbf M_3^ \delta $ is an additive
contact lattice. However,
$\mathbf M_3^ \delta $ cannot be semilattice embedded into a 
distributive weak contact lattice, since 
$b \nd c$ implies $b c=0$ in any weak contact lattice
and this would give $a=a+bc=(a+b)(a+c)=1$.
Moreover, we have 
$b \leq a+b$,  $b \leq a+c$ and $b \nd c$,
but it is not the case that $b \leq a$.
This shows that an additive contact lattice
does not necessarily satisfy \eqref{d1} (take $c_0=b$, $c_1=c$).  

By \cite[Theorem 4 (b)]{cp} $\mathbf M_3^ \delta $
can be semilattice embedded  
 into a bounded complete weak contact
lattice $\mathbf S$ with overlap  contact relation.
By the above comments, $\mathbf S$ cannot be chosen
to be a distributive lattice. 
 On the other hand, we can take $\mathbf S$ to be a modular lattice:
let $\mathbf S$ be the lattice of subgroups 
of the product $\mathbb Z_4 \times \mathbb Z_4$
of the cyclic group of order $4$. Let $\varphi(b)= \mathbb Z_4 \times \{ 0 \} $,
$\varphi(c)= \{ 0 \} \times \mathbb Z_4 $ and $\varphi(a)$
be the subgroup  of $\mathbb Z_4 \times \mathbb Z_4$
consisting of those pairs with even difference.

(c) Let $\mathbf  B_8$ be the $8$-element Boolean algebra 
with three atoms $a$, $b$ and $c$, with $ c \nd a$,
$c \nd b$, the symmetric relations and all the other pairs of nonzero elements
$\delta$-related.  The weak contact on   $\mathbf  B_8$ is
 not additive, since
$c \ddd a+b $ but neither
$c \ddd a$ nor $c \ddd b$. 
By Theorem \ref{thmb}
$\mathbf  B_8$ satisfies \eqref{d1}. 
\arxiv{Thus  \eqref{d1} 
does not
imply \eqref{add}, even in weak contact Boolean algebras.
 In particular,  \eqref{d1} 
does not
imply \eqref{d2}, since in Remark \ref{noti} we  have showed that \eqref{d2}
implies  \eqref{add}. }

The above example shows that Theorems \ref{thm}
and Theorem \ref{thmb} have distinct ranges of application.

 (d) We now show  that 
\eqref{add} together with \eqref{d1}
do not imply \eqref{d2}.
   
Consider the  join semilattice $\mathbf S$  freely generated by
six elements $c,d, e,f, \allowbreak x,y$ with the relations
\begin{equation} \labbel{fd2}    
\begin{aligned}  
x&\leq c+e,   &  \quad   x&\leq d+f ,      &  
 \quad    x&\leq c+d,     &  \quad    x&\leq e+f,     
\\
  y &\leq c+f, 
   &      y &\leq d+e,
&      y &\leq c+d,
   &      y &\leq e+f. 
 \end{aligned}
 \end{equation}      
The first relation is intended to mean
$x+c+e=c+e$, and similarly for the other relations. 
Elements of $S$ are formal  sums
of subsets of  $\{c,d,e,f,x,y \}$,
including the empty sum $0$,  
with the reducing rules $x+c+e=c+e$ etc.,
modulo associativity, commutativity and idempotence.
Since the reducing rules do not modify 
the set of elements from $\{ c,d,e,f\}$ 
appearing in the expressions, and since the rules
only eliminate either $x$ or $y$, then the final
result of a sequence of reductions is uniquely determined
(in formal terminology, we have a unique normal form). 
The same argument shows that $\mathbf S$ is actually a join 
semilattice, and that if $\mathbf F$ is the
free join semilattice on $\{c,d, e, f  \}$, then
no pair of elements from $F$ are identified by any
chain of reductions. In other words, 
 $\mathbf S$ a
semilattice extending $\mathbf F$.

More explicitly, $\mathbf S$ is the union of the 
free join semilattice on $\{c,d, e, f  \}$,
plus the elements
$x$,  $x+c$, $x+d$, $x+e$,  $x+f$,
$x+d+e$, $x+c+f$, plus
$y$ and the symmetrical sums, 
plus  $x+y+c$,
$x+y+d$, $x+y+e$,   $x+y+f$.

Define an additive contact relation on $S$ 
by setting 
$c \nd d$, $e \nd f$, the symmetrical relations, and letting all the other pairs
of nonzero elements be $\delta$-related.
From the fact that each element is $\delta$-unrelated
with at most one nonzero element,
 it follows that
$\mathbf S$ satisfies \eqref{add}.
On the other hand, \eqref{d2}
is not satisfied in $\mathbf S$: take
$n=2$, $c _{1,0} = c $, 
$c _{1,1} = d $, $c _{2,0} = e $, $c _{2,1} = f $, 
and $x$, $y$ in place of $a$ and $b$ in \eqref{d2}.   
The definition of $\delta$ and the first two columns in \eqref{fd2}
 witness that the assumptions in \eqref{d2} are satisfied.
However, the conclusion fails, since $y \ddd x $.  

The last  two columns in \eqref{fd2}
have been added in order to have \eqref{d1}
satisfied. Indeed, from $x \leq c+ e \leq c+d+ e$
and  $x \leq d+ f  \leq c+d+ f$, we get  $x \leq c+d$,
provided \eqref{d1} is satisfied
(take $b=x$, $a=c+d$, $c_0=e$ and $c_1=f$). 

  In order to check that \eqref{d1} is actually satisfied in
$\mathbf S$ in all the remaining cases, we need to perform some computations.
A general element of $S$ has the form
$w +b$, where  
$ w \in \{ 0, x, y, x+y\}$
and $b \in F$, recalling that 
$\mathbf F$ is the
subsemilattice  of $\mathbf S$ generated by $\{c,d, e, f  \}$. 
If $ w, z \in \{ 0, x, y, x+y\}$,
let $w \setminus z$ be the sum of those
variables which appear in the sum giving $w$,
but not in the sum giving $z$. E.~g.,    
$(x+y) \setminus x = y$; 
 $(x+y) \setminus 0 = x+y$;
$x \setminus y = x$;
$x \setminus (x+y) = 0$.
We now check that if 
$ w, z \in \{ 0, x, y, x+y\}$
and $a, b \in F$, then 
\begin{equation}\labbel{sum} 
w+b \leq z+ a \quad \text{ if and only if  } \quad 
b \leq a \text{ and } w \setminus z \leq a.  
  \end{equation}    
Indeed, 
by convention, 
$w+b \leq z+ a$ means
\begin{equation}\labbel{uf}    
w+b + z+ a = w+z+ b+a = z+ a.
  \end{equation} 
If \eqref{sum} holds, then necessarily $b+a=a$, since,
as commented above, the presence of $x$ or $y$
never deletes the presence of an element from $F$.  
If $b+a=a$, that is $b \leq a$, then \eqref{uf} reads
$ w+z+ a = z+ a$, hence all the variables in 
$w$ but not in $z$ should disappear on the left-hand side, and this happens
exactly when $w \setminus z \leq a$.
Conversely,  if $b \leq a$, then 
$b+a=a$, hence  $z+b+a=z+a$
and if, furthermore,  
$w \setminus z \leq a$, then 
 $w+z+b+a=z+a$, since all the variables in 
$w$ not in $z$ are absorbed by $a$;
the remaining variables in $w$    
are already present in $z$, hence are absorbed by $z$.  

With \eqref{sum} at our disposal, we now can prove
that \eqref{d1} holds in $\mathbf S$.
The only possible choices for   $c_0$ and $c_1$ in \eqref{d1} are 
either $c_0=c$, $c_1=d$, or $c_0=e$, $c_1=f$,
or symmetrically, 
since all the other nonzero pairs are $\delta$-related.
By symmetry, we may  assume that $c_0=c$ and $c_1=d$.
Relabel $b$ and $a$ from \eqref{d1} as  
$w +b$ and $z+a$, where  
$ w, z \in \{ 0, x, y, x+y\}$
and $b, a \in F$
and suppose
$w +b \leq z+a +c$
and
$w +b \leq z+a +d$.
By \eqref{sum} 
$b \leq a +c$
and
$b \leq a +d$.
Notice that $\mathbf F$ is ordered like a distributive lattice, 
since it is order-isomorphic (and join semilattice isomorphic)
to the lattice on $\mathcal P( \{ c,d,e,f \} )$
with $\cup$ and $\cap$. 
From
$b \leq a + c$ and $b \leq a +d$
we get $b \leq (a + c)(a+d)= a+cd=a$.
Again by \eqref{sum},
$w \setminus z \leq a +c $
and   $w \setminus z \leq a +d $.
Suppose, say, $ w \setminus z = x$.
We have 
$x \leq a+c$ exactly in case either
$a \geq d$ or  $a \geq e$.    
We have 
$x \leq a+d$ exactly in case either
$a \geq c$ or   $a \geq f$.    
Thus if both $x \leq a+c$ and $x \leq a+d$ hold, then either
$a \geq c+d$, $a \geq c+e$, $a \geq f+d$ or $a \geq f+e$.
In each case $a \geq x$, by \eqref{fd2}.   
By \eqref{sum}, and since we have proved that 
$b \leq a$, we get  $w +b \leq z+a $.
The case  $ w \setminus z = y$ is symmetrical.
If  $ w \setminus z = x+y$, then
$x+y \leq a+c$ exactly in case either
$a \geq d$ or $a \geq e+f$.
Moreover,  
$x+y \leq a+d$ exactly in case either
$a \geq c$ or $a \geq e+f$.
Thus if both $x+y \leq a+c$ and $x+y \leq a+d$ hold, then either
$a \geq c+d$ or $a \geq e+f$.
In both cases $a \geq x+y$,
hence 
$w +b \leq z+a $, again  by \eqref{sum}.
  
\end{examples}

\begin{problems} \labbel{prob}
(a) Study posets and semilattices with a weak pre-contact relation,
namely, a binary relation satisfying only \eqref{emp},
 \eqref{ext}, \eqref{ref}, not necessarily \eqref{sym}.
See, e.~g., \cite{DV} for the additive case in Boolean algebras.

The problem is also connected with the proposal from \cite{mtt}
briefly hinted in the introduction. As well-known, if, among subsets
of a topological space, we define a relation $\delta$ by
$x \ddd y$ if $x \cap Ky \neq \emptyset  $, then $\delta$ is a weak
pre-contact, which is preserved under images of continuous
functions. 

\arxiv{(b)  Is the class of weak contact semilattices described by
Theorem \ref{thm}  finitely axiomatizable?
This is a problem asked in \cite[Section 8]{I}.
We expect that the answer is negative, but we have not worked out
counterexamples. Possibly, variations on Example \ref{ex}(d)
might work.  On the other hand, the class from Theorem  \ref{thmb} 
is indeed finitely axiomatizable, as shown by clause (3). }

(c) Characterize the classes of weak contact semilattices which can be
semilattice embedded into weak contact modular lattices
(with additive contact, with overlap contact).
Compare Example \ref{ex} (b). 
\end{problems}

\end{document}